\theoremstyle{definition}
\newtheorem{corollary}{Corollary}
\newtheorem{lemma}[corollary]{Lemma}
\newtheorem{theorem}[corollary]{Theorem}
\begin{document}

\title{Algebras, graphs and thetas}

\author{\small{\bf Marcel K. de Carli Silva}\footnote{M. de Carli Silva acknowledges support from CNPq (Brazil), Proc.~477203/2012-4 and 456792/2014-7.} \\ \small{Dept.\ of Computer Science} \\ \small{University of São Paulo} \\ \small{São Paulo, Brazil} \\ \small{mksilva@ime.usp.br} \and
\small{\bf Gabriel Coutinho} \\ \small{Dept.\ of Computer Science} \\ \small{Federal University of Minas Gerais} \\ \small{Belo Horizonte, Brazil}\\ \small{gabriel@dcc.ufmg.br} \and
\small{\bf Chris Godsil}\footnote{C. Godsil acknowledges support of NSERC (Canada), Grant No.\ RGPIN-9439.} \\ \small{Dept.\ of Combinatorics}\\\small{and Optimization}\\\small{University of Waterloo} \\ \small{Waterloo, Canada} \\ \small{cgodsil@uwaterloo.ca}\and
\small{\bf David E. Roberson} \\ \small{Dept.\ of Applied Mathematics}\\\small{and Computer Science} \\ \small{Technical University of Denmark} \\ \small{Kongens Lyngby, Denmark} \\ \small{davideroberson@gmail.com}}

\date{\small{\today}}
\maketitle
\vspace{-0.8cm}
\begin{abstract} 
    We extend the clique-coclique inequality, previously known to hold for graphs in association schemes and vertex-transitive graphs, to graphs in homogeneous coherent configurations and 1-walk regular graphs.  We further generalize it to a stronger inequality involving the Lovász theta number of such graph, and some theta variants, including characterizations of the equality.
    \begin{center}
    \textbf{Keywords}
    \end{center}
    clique-coclique bound, matrix $*$-algebra, Lovász theta function, coherent configuration
    
\end{abstract}

\section{Introduction}

Denote the maximum size of a clique in a graph~$G$ by $\omega(G)$ and the maximum size of a coclique by~$\alpha(G)$; a coclique is an independent set of vertices, also called a stable set.

Think of a graph with a large clique and a large coclique. In general, there seems to be no (non-obvious) restriction on how large these two substructures can be compared to the size of the graph. However, if the graph displays high regularity or symmetry, then one soon finds out that large cliques cannot appear together with large cocliques, and vice-versa. More specifically, for a graph $G$ on $n$ vertices that is either  distance-regular or vertex-transitive, we have $\alpha(G) \omega(G) \leq n$. This is known as the clique-coclique bound; see for instance \cite[Chapter 3]{GodsilMeagherEKRBook}. In this paper, we observe that a more general framework in which these graphs and their cliques and cocliques can be cast is sufficient to prove the clique-coclique bound. This appears in Theorems \ref{thm:coherent} and \ref{thm:walkreg}, and Corollary \ref{cor:cliquecoclique}.

Our setting also allows for an extension of this inequality to a stronger inequality (which turns out to be an equality) involving the Lovász theta number of a graph and its complement, and another equality for some theta variants. This is found in Corollary \ref{cor:eqthetas}. These results were known for vertex-transitive graphs only, therefore our contribution significantly increases the number of known graphs satisfying the properties displayed.

Some work has been done in the past in topics strongly related to the topic of this paper. Godsil and Meagher \cite{GodsilMeagherEKRBook} present a full account of known bounds for the size of cliques and cocliques in graphs belonging to association schemes. Dukanovic and Rendl \cite{DukanovicRendl} proved some equalities involving generalizations of thetas for a vertex-transitive graph and its complement. Roberson \cite{RobersonConicFormulations} pointed out that we could somehow relax the requirement of the graph being vertex-transitive. Some of the results in our paper work in the direction of finding exactly which graphs satisfy this relaxed condition. The use of positive semidefiniteness to find bounds in combinatorial structures dates back to Delsarte's thesis \cite{DelsarteAssocSchemesCoding}. It was revived more recently by Schrijver \cite{SchrijverNewCodeUpperBounds} in the context of coding theory and later applied to some coherent configurations by Hobart \cite{HobartBoundsSubsetsCoherent} and Hobart and Williford \cite{HobartWillifordTightnessBoundsSubsetsCoherent}. Some surveys can be found in \cite{AnjosLasserreHandbook}. In all cases, the main tool is the fact that the projection of a positive semidefinite matrix in certain algebras of matrices remains positive semidefinite. We explore this fact in Lemmas \ref{lem1} and \ref{lem2} to build our theoretical framework.

\section{Algebras}

Equip the complex vector space $\textrm{M}_n(\C)$ of complex $n \times n$ matrices with the trace inner-product $\langle M, N\rangle = \tr MN^*$.  We will work on matrix $*$-algebras, which means a linear subspace of $\C^{n \times n}$ that is closed under the conventional matrix product and under taking the conjugate transpose.  We start with the following consequence of Wedderburn's Theorem on semisimple algebras (\cite[Chapter 5]{CohnBasicALgebra}); see for instance \cite[Theorem~2.7]{Bachoc} for a more self-contained version, or \cite[Chapter 1]{ArvesonInvitation}.

\begin{theorem}\label{thm:decomp}
  If $\A$ is a matrix $*$-algebra, then $\A$ is the direct sum of simple matrix $*$-algebras
  \begin{align}\label{eq:1}
    \A = \bigoplus_{i = 0}^s \A_i.
  \end{align}
\end{theorem}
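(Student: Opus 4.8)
The plan is to reduce the statement to the classical Wedderburn--Artin theorem for finite-dimensional associative algebras over $\C$, whose only extra input I need is that the block decomposition it provides is compatible with the involution $*$. The single ingredient that makes everything work is the positive definiteness of the trace inner product: for $M \in \textrm{M}_n(\C)$ one has $\langle M, M\rangle = \tr M M^* = \sum_{i,j} |M_{ij}|^2 \ge 0$, with equality if and only if $M = 0$.

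First I would establish that $\A$ is semisimple, i.e.\ that its Jacobson radical $J$ vanishes. Since $\A$ is finite-dimensional, $J$ is a nilpotent two-sided ideal. Given $a \in J$, the element $a^*a$ again lies in $J$, hence is nilpotent as a matrix; but $a^*a$ is Hermitian positive semidefinite, and a positive semidefinite nilpotent matrix has all eigenvalues equal to zero and is therefore the zero matrix. Thus $\langle a, a\rangle = \tr a^* a = 0$, forcing $a = 0$. Hence $J = 0$ and $\A$ is a semisimple finite-dimensional algebra, so Wedderburn--Artin applies: $\A$ decomposes as a direct sum of simple two-sided ideals $\A_i$, each isomorphic as an algebra to a full matrix algebra over $\C$, and the unit of $\A$ splits as $\sum_i e_i$, where the $e_i$ are the uniquely determined minimal central idempotents, satisfying $e_i e_j = 0$ for $i \ne j$ and $\A_i = e_i \A$.

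The step I expect to be the crux is checking that this decomposition respects $*$, i.e.\ that each $\A_i$ is itself closed under the involution; equivalently, that each minimal central idempotent is self-adjoint, $e_i^* = e_i$. Because $*$ is a conjugate-linear anti-automorphism, it carries the center to itself and permutes the minimal central idempotents, so $e_i^*$ is again some $e_j$. If one had $e_i^* = e_j$ with $j \ne i$, then $e_i e_i^* = e_i e_j = 0$, whence $\langle e_i, e_i\rangle = \tr e_i e_i^* = 0$ and $e_i = 0$ by positive definiteness, a contradiction. Therefore $e_i^* = e_i$ for every $i$.

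It then remains to assemble the pieces. For $a \in \A_i$ we have $a = e_i a$, so $a^* = a^* e_i^* = a^* e_i \in \A e_i = \A_i$, showing that $\A_i$ is a $*$-subalgebra; being simple as an algebra, it is a simple matrix $*$-algebra, and $\A = \bigoplus_i \A_i$ is the desired direct sum. The only subtlety I anticipate is that $\A$ need not contain the ambient identity $I_n$, but this causes no difficulty: the semisimple algebra $\A$ carries its own unit $\sum_i e_i$, and the whole argument uses only this internal identity together with the positivity of the trace form inherited from $\textrm{M}_n(\C)$.
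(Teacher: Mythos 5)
Your proof is correct and follows exactly the route the paper intends: the paper gives no proof of its own, stating the result as a consequence of Wedderburn's theorem and citing references, and your argument is the standard one behind those citations --- positive definiteness of the trace form $\langle M,M\rangle=\tr MM^*$ kills the (nilpotent) radical so that Wedderburn--Artin applies, and the same positivity forces each minimal central idempotent to be self-adjoint, so the simple blocks are $*$-closed. Your closing remark about $\A$ not necessarily containing the ambient identity $I_n$ is a worthwhile observation that the paper glosses over.
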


Given a matrix $M$, we will use $M'$ to denote the orthogonal projection of $M$ onto $\A$. Because of the decomposition from above, it follows that $M'$ is the sum of the projection of $M$ onto each $\A_i$. The following fact is also well-known, see for instance \cite[Corollary 9.1]{AnjosLasserreHandbook}. 

\begin{corollary}\label{cor:psd}
  The projection of a positive semidefinite matrix onto a matrix $*$-algebra is positive semidefinite.
\end{corollary}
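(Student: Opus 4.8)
The plan is to reduce to the case of a single simple matrix $*$-algebra and then compute the projection explicitly. By Theorem~\ref{thm:decomp} we have $\A=\bigoplus_{i=0}^s\A_i$, and, as noted above, the projection $M'$ of a matrix $M$ decomposes as $M'=\sum_{i=0}^s M_i'$, where $M_i'$ is the orthogonal projection of $M$ onto the simple component $\A_i$. Since a sum of positive semidefinite matrices is again positive semidefinite, it suffices to show that each $M_i'$ is positive semidefinite whenever $M$ is. In other words, I would prove the statement for a simple matrix $*$-algebra and then sum over the components.

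So fix a simple matrix $*$-algebra $\mathcal{B}\subseteq\mathrm{M}_n(\C)$. The key structural input is the Wedderburn normal form for a simple $*$-algebra acting on $\C^n$: after conjugating by a unitary matrix---an operation that preserves positive semidefiniteness and the trace inner product, hence is compatible with orthogonal projection---we may assume that there is an orthogonal decomposition $\C^n=(\C^d\otimes\C^m)\oplus W$ such that $\mathcal{B}=\{X\otimes I_m:X\in\mathrm{M}_d(\C)\}$ acts in the natural way on $\C^d\otimes\C^m$ and acts as zero on $W$. Here $d$ is the degree of the underlying irreducible representation and $m$ its multiplicity; note that $W=0$ exactly when $\mathcal{B}$ is unital.

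With these coordinates the projection onto $\mathcal{B}$ is explicit. Writing $M$ in block form relative to $(\C^d\otimes\C^m)\oplus W$, the component of $M$ in $\mathcal{B}$ must annihilate $W$ and all off-diagonal blocks, since these are orthogonal to $\mathcal{B}$; and on the block $M_0$ supported on $\C^d\otimes\C^m$ the orthogonality conditions $\langle M_0-X\otimes I_m,\,Y\otimes I_m\rangle=0$ for all $Y\in\mathrm{M}_d(\C)$ force the projection to be $\tfrac1m(\operatorname{Tr}_m M_0)\otimes I_m$, where $\operatorname{Tr}_m$ is the partial trace over the multiplicity factor $\C^m$. It then remains to check that this whole process preserves positive semidefiniteness, which follows from a chain of elementary facts: $M_0$ is a principal submatrix of $M$ and so is positive semidefinite; the partial trace $\operatorname{Tr}_m M_0=\sum_k(I_d\otimes e_k^*)\,M_0\,(I_d\otimes e_k)$ is a sum of compressions of a positive semidefinite matrix, hence positive semidefinite; scaling by $1/m>0$ and tensoring with $I_m$ keep it positive semidefinite; and finally a block-diagonal matrix whose blocks (including the zero block on $W$) are positive semidefinite is itself positive semidefinite.

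I expect the main obstacle to be not the positivity bookkeeping, which is routine, but the careful invocation of the structure theory: namely, verifying that distinct simple components live on mutually orthogonal subspaces (so that $M'=\sum_i M_i'$ really is a sum of projections onto orthogonal summands), that each simple component is unitarily equivalent to $\mathrm{M}_d(\C)\otimes I_m$ on an isotypic subspace, and that the projection is then the normalized partial trace above. These facts are precisely the content of Wedderburn's theorem for $*$-algebras. Conceptually, the normalized partial trace is the average of $M_0$ over conjugation by the unitaries $I_d\otimes u$, which exhibits the projection as an average of positive semidefinite matrices and gives a second, representation-theoretic explanation of why positivity is preserved.
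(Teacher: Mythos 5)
Your proof is correct. Note that the paper itself does not prove this corollary at all---it simply cites it as well known (\cite[Corollary~9.1]{AnjosLasserreHandbook})---so what you have written is a genuine proof where the paper has only a reference. Your argument is sound in every step: the simple summands of Theorem~\ref{thm:decomp} are mutually orthogonal ideals, so the projection does split as $M'=\sum_i M_i'$; the Wedderburn normal form $\mathcal{B}\cong \mathrm{M}_d(\C)\otimes I_m$ on an isotypic subspace (plus zero on $W$) is the correct structure theorem for a simple matrix $*$-algebra; your orthogonality computation correctly identifies the projection as $\frac1m(\operatorname{Tr}_m M_0)\otimes I_m$ on the relevant block and zero elsewhere; and each of the positivity-preserving steps (principal submatrix, compressions summed in the partial trace, positive scaling, tensoring with $I_m$, block-diagonal assembly) is valid. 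The one standard alternative worth knowing is shorter when $I\in\A$: the orthogonal projection onto $\A$ coincides with averaging $M\mapsto \int U M U^*\,dU$ over the unitary group of the commutant $\A'$ (its image is $\A''=\A$ by the double commutant theorem, and it is self-adjoint and idempotent), which exhibits $M'$ as an average of positive semidefinite matrices in one stroke; the non-unital case reduces to this by compressing to the support projection of $\A$. Your closing remark about twirling by $I_d\otimes u$ is exactly this argument localized to one simple summand, so you have in effect both proofs.
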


From here on, we assume all matrices in $\A$ have constant diagonal. This property shall be referred as $\A$ being \textit{homogeneous}.

\begin{lemma}\label{lem:0}
  If $\A$ is a homogeneous matrix $*$-algebra, then all $01$ matrices in $\A$ have constant row sums and constant column sums. 
  
  Moreover, if $\A$ contains an irreducible $01$ matrix, then the all $1$s matrix $J$ belongs to $\A$.
  
  Finally, if $\A$ contains the all 1s matrix $J$, then the row sums and the column sums of all matrices in $\A$ are equal, that is, $J$ lies in the center of $\A$.
\end{lemma}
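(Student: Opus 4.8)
The plan is to prove the three assertions in turn; the first and third are short consequences of homogeneity, while the second requires Perron--Frobenius theory and is where the real work lies. For the first assertion, let $M \in \A$ be a $01$ matrix. Since $\A$ is a $*$-algebra it contains $M^*$ and hence both $MM^*$ and $M^*M$. The point is that, because the entries of $M$ lie in $\{0,1\}$, the diagonal entry $(MM^*)_{ii} = \sum_j M_{ij}\overline{M_{ij}} = \sum_j M_{ij}$ is exactly the $i$-th row sum of $M$, and likewise $(M^*M)_{ii} = \sum_j M_{ji}$ is the $i$-th column sum. Homogeneity forces these diagonals to be constant, so all row sums of $M$ are equal and all column sums of $M$ are equal; summing all entries two ways shows the common row sum equals the common column sum.

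For the second assertion, which I expect to be the main obstacle, let $M \in \A$ be an irreducible $01$ matrix, with common row and column sum $r$ by the first part. Then $\mathbf{1}$ is simultaneously a right eigenvector ($M\mathbf{1} = r\mathbf{1}$) and a left eigenvector ($\mathbf{1}^\top M = r\mathbf{1}^\top$) of $M$ for the eigenvalue $r$, and since $M$ is nonnegative and irreducible with positive eigenvector $\mathbf{1}$, Perron--Frobenius theory guarantees that $r$ is the spectral radius and is a \emph{simple} eigenvalue. The crucial step is to identify the spectral (Riesz) idempotent $E_r$ at $r$ with $\tfrac1n J$: because $\mathbf{1}^\perp$ is $M$-invariant (indeed $\mathbf{1}^\top(Mx) = r\mathbf{1}^\top x = 0$ whenever $\mathbf{1}^\top x = 0$) and complements $\mathrm{span}(\mathbf{1})$, the rank-one idempotent $\tfrac1n J$ commutes with $M$, has range $\mathrm{span}(\mathbf{1})$ and kernel $\mathbf{1}^\perp$, which are precisely the $r$-eigenspace and the sum of the remaining generalized eigenspaces. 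Hence $\tfrac1n J$ is the spectral idempotent $E_r$. Since $r$ is a simple eigenvalue, $E_r$ is a polynomial in $M$ and so lies in $\A$, giving $\tfrac1n J \in \A$ and therefore $J \in \A$. The delicate part is exactly this verification that the Perron projection coincides with $\tfrac1n J$ and is expressible as a polynomial in $M$.

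For the third assertion, suppose $J \in \A$ and let $N \in \A$ be arbitrary, so that $JN, NJ \in \A$. A direct computation gives $(JN)_{ii} = \sum_k N_{ki}$, the $i$-th column sum of $N$, and $(NJ)_{ii} = \sum_k N_{ik}$, the $i$-th row sum of $N$. Homogeneity applied to $JN$ and $NJ$ then forces $N$ to have constant column sums and constant row sums, which (again by counting all entries two ways) agree in a common value $r_N$. Reading off all entries, this yields $JN = r_N J = NJ$, so $J$ commutes with every element of $\A$ and hence lies in its center, completing the proof.
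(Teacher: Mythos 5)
Your proof is correct and follows essentially the same route as the paper: row/column sums read off the diagonals of $MM^*$ and $M^*M$, Perron--Frobenius plus the spectral idempotent at the simple eigenvalue $r$ to get $J$ as a polynomial in the irreducible matrix, and the diagonals of $JN$ and $NJ$ for the last claim. The only difference is that you spell out the identification of the Perron projection with $\tfrac1n J$ in more detail than the paper does, which is a welcome clarification rather than a departure.
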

\begin{proof}
  Let $A \in \A$ be a $01$ matrix. Then $A^* \in \A$, and so $AA^* \in \A$ and $A^* A \in \A$. The diagonal entries of $AA^*$ are the row sums of $A$, and the diagonal entries of $A^* A$ are the column sums of $A$. Because $\A$ is homogeneous, all row sums are equal, and the same holds for column sums. Now if $A$ is irreducible, and because its row sums are constant, then by Perron-Frobenius theory the all $1$s vector is an eigenvector in a $1$-dimensional subspace, and therefore $J$ is a polynomial in $A$, hence it belongs to $\A$. The result now follows from noting that, for all $M \in \A$, the diagonal entries of $MJ$ and $JM$ are, respectively, the row sums and column sums of $M$. These are constant diagonals, and because $\tr MJ = \tr JM$, it follows that these diagonal are, in fact, equal.
\end{proof}

As $M \mapsto M'$ is a self-adjoint operator, it follows that
\[\langle M , N' \rangle = \langle M' , N\rangle,\]
a fact that we exploit below for two special cases.

\begin{lemma}
  Let $M$ be a matrix, and let $\A$ be a homogeneous matrix $*$-algebra
  that contains $I$ and $J$. Then
  \[\tr M' = \tr M,\]
  and
  \[ \tr J M' = \tr J M.\]
\end{lemma}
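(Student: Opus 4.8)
The plan is to reduce both identities to a single structural fact, namely that the projection operator $M \mapsto M'$ is self-adjoint with respect to the trace inner product, combined with the observation that the projection fixes every element of $\A$. Since the projection is the orthogonal projection onto $\A$, it is idempotent and acts as the identity on $\A$; in particular, because the hypotheses guarantee $I \in \A$ and $J \in \A$, we have $I' = I$ and $J' = J$. These two equalities are the only inputs specific to this lemma; everything else is formal manipulation of the inner product $\langle M, N \rangle = \tr MN^*$ together with the self-adjointness relation $\langle M, N' \rangle = \langle M', N \rangle$ recorded just before the statement.

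For the first identity I would rewrite each trace as an inner product against $I$. Observing that $I^* = I$, we have $\langle M, I \rangle = \tr M I^* = \tr M$ and likewise $\langle M', I \rangle = \tr M'$. Applying the self-adjointness relation with $N = I$ gives
\[
  \tr M' = \langle M', I \rangle = \langle M, I' \rangle = \langle M, I \rangle = \tr M,
\]
where the middle step uses $I' = I$. This settles $\tr M' = \tr M$.

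For the second identity I would argue identically, now testing against $J$. Since $J$ is real and symmetric we have $J^* = J$, and by the cyclic property of the trace $\tr JM = \tr MJ = \langle M, J \rangle$, with the analogous statement for $M'$. Invoking self-adjointness with $N = J$ and then $J' = J$ yields
\[
  \tr J M' = \langle M', J \rangle = \langle M, J' \rangle = \langle M, J \rangle = \tr J M.
\]

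I do not anticipate a genuine obstacle here: the content is entirely captured by the self-adjointness of the projection and the fact that $I$ and $J$ lie in $\A$ and are therefore fixed by the projection. The only point requiring a little care is bookkeeping around the conjugate transpose in the inner product and the cyclicity of the trace, so that $\tr M$ and $\tr JM$ are correctly identified as $\langle M, I \rangle$ and $\langle M, J \rangle$ respectively; once the conventions $I^* = I$ and $J^* = J$ are noted, both equalities drop out immediately.
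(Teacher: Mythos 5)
Your proof is correct and follows essentially the same route as the paper: both arguments use only that $I' = I$ and $J' = J$ (since $I, J \in \A$) together with the self-adjointness identity $\langle M', N \rangle = \langle M, N' \rangle$ applied with $N = I$ and $N = J$. The extra care you take with $I^* = I$, $J^* = J$ and cyclicity of the trace is a fine, if slightly more verbose, rendering of the paper's one-line computation.
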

\begin{proof}
  Since $I$ and $J$ belong to $\A$, we have $I' = I$ and $J' = J$.  Thus, $\tr M' = \langle M' , I \rangle = \langle M , I' \rangle = \tr M$ and $\tr J M' = \tr M' J = \langle M' , J \rangle  = \langle M , J' \rangle = \tr M J = \tr J M$.
\end{proof}

We show below how to obtain a trace inequality in a special but useful case.

\begin{lemma} \label{lem1}
  Let $\A$ be a homogeneous matrix $*$-algebra that contains~$I$. Let $M$ and $N$ be positive semidefinite $n \times n$ matrices. Let $I = P_0 , P_1 , \dotsc , P_d$ be an orthogonal basis for the algebra $\A$. Assume that, for all $i \neq 0$, $\langle M , P_i \rangle \langle P_i, N \rangle \leq 0$. Then
  \[\langle M',N' \rangle \leq \frac{(\tr M)(\tr N)}{n}.\]
  Moreover, equality holds if and only if, for all $i \neq 0$, $\langle M , P_i \rangle \langle P_i, N \rangle = 0$.
\end{lemma}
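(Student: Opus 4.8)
The plan is to expand both projections in the given orthogonal basis and read off the inequality termwise. First, since $I = P_0, P_1, \dotsc, P_d$ is an orthogonal basis of $\A$, the orthogonal projection of any matrix $X$ onto $\A$ is
\[
  X' = \sum_{i=0}^d \frac{\langle X, P_i\rangle}{\langle P_i, P_i\rangle}\, P_i,
\]
as one checks from $\langle X - X', P_j\rangle = 0$ for every $j$ using orthogonality of the $P_i$. Applying this to $M$ and $N$ and computing $\langle M', N'\rangle$, all cross terms vanish by orthogonality and we are left with a single diagonal sum
\[
  \langle M', N'\rangle = \sum_{i=0}^d \frac{\langle M, P_i\rangle\,\overline{\langle N, P_i\rangle}}{\langle P_i, P_i\rangle} = \sum_{i=0}^d \frac{\langle M, P_i\rangle\,\langle P_i, N\rangle}{\langle P_i, P_i\rangle},
\]
where I use $\overline{\langle N, P_i\rangle} = \langle P_i, N\rangle$, which follows from the conjugate symmetry of the trace inner product.

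Next I would isolate the $i = 0$ term. Since $P_0 = I$, we have $\langle P_0, P_0\rangle = \tr I = n$, while $\langle M, P_0\rangle = \tr M$ and $\langle P_0, N\rangle = \tr N$ are both real because $M$ and $N$ are positive semidefinite. Hence the $i = 0$ term equals exactly $(\tr M)(\tr N)/n$, so that
\[
  \langle M', N'\rangle = \frac{(\tr M)(\tr N)}{n} + \sum_{i \neq 0} \frac{\langle M, P_i\rangle\,\langle P_i, N\rangle}{\langle P_i, P_i\rangle}.
\]

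It then remains to control the sign of the residual sum. Each denominator $\langle P_i, P_i\rangle = \lVert P_i\rVert^2$ is strictly positive since $P_i \neq 0$, while the hypothesis guarantees $\langle M, P_i\rangle\,\langle P_i, N\rangle \leq 0$ for every $i \neq 0$. Thus each term of the residual sum is nonpositive, which yields the claimed inequality; and since a sum of nonpositive terms vanishes precisely when every term does, equality holds if and only if $\langle M, P_i\rangle\,\langle P_i, N\rangle = 0$ for all $i \neq 0$.

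The argument is essentially a bookkeeping exercise, and I expect the only delicate point to be tracking the complex conjugates in the trace inner product, in particular verifying that $\langle M', N'\rangle$ is real so that comparing it to the real number $(\tr M)(\tr N)/n$ is meaningful. This reality is guaranteed because $M'$ and $N'$ are positive semidefinite by Corollary \ref{cor:psd}, hence Hermitian, and the trace of a product of Hermitian matrices is real. Beyond this I do not anticipate any genuine obstacle.
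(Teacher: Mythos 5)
Your proof is correct and follows essentially the same route as the paper: expand the projection(s) in the orthogonal basis, reduce $\langle M',N'\rangle$ to the sum $\sum_i \langle M,P_i\rangle\langle P_i,N\rangle/\langle P_i,P_i\rangle$, identify the $i=0$ term as $(\tr M)(\tr N)/n$, and drop the remaining nonpositive terms. The only cosmetic difference is that the paper expands just $M'$ and uses $\langle M',N'\rangle=\langle M',N\rangle$ via self-adjointness of the projection, whereas you expand both and invoke orthogonality of the basis; the resulting computation and equality characterization are identical.
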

\begin{proof}
  It is a straightforward computation:
  \begin{align*}
    \langle M', N'\rangle = \langle M' , N \rangle & = \tr \left(\sum_{i = 0}^d \frac{\langle M , P_i \rangle}{\langle P_i , P_i \rangle} P_i \right)N^*  \\
                                                   & = \sum_{i = 0}^d \frac{\langle M , P_i \rangle \langle P_i, N \rangle}{\langle P_i , P_i \rangle} \\
                                                   & \leq \frac{\langle M , I \rangle \langle I, N \rangle}{\langle I , I\rangle }\\
                                                   & = \frac{(\tr M)(\tr N)}{n}.
  \end{align*}
  The equality characterization follows immediately.
\end{proof}

\begin{lemma}\label{lem2}
  Let $\A$ be a homogeneous matrix $*$-algebra that contains $I$ and $J$. Let $M$ and $N$ be positive semidefinite $n \times n$ matrices. Then
  \[\langle M',N'\rangle \geq \frac{(\tr J M) (\tr J N)}{n^2}.\]
  Moreover, equality holds if and only if $M' N'$ is a multiple of $J$.
\end{lemma}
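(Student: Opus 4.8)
The plan is to prove a lower bound on $\langle M', N' \rangle$, which is the mirror image of Lemma~\ref{lem1}. Here the relevant distinguished element of the algebra is $J$ rather than $I$. Since $J \in \A$ is homogeneous with constant diagonal, $\langle J, J \rangle = \tr JJ^* = n^2$, and $J$ is (up to scaling) the orthogonal projection onto the span of the all-ones direction. The key idea is to extract the $J$-component of the inner product and bound the remaining terms from below.

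First I would complete $J$ to an orthogonal basis $P_0 = J, P_1, \dotsc, P_d$ of $\A$, and expand
\begin{align*}
  \langle M', N' \rangle = \langle M', N \rangle = \sum_{i=0}^d \frac{\langle M, P_i \rangle \langle P_i, N \rangle}{\langle P_i, P_i \rangle}.
\end{align*}
The $P_0 = J$ term contributes exactly $\frac{\langle M, J \rangle \langle J, N \rangle}{n^2} = \frac{(\tr JM)(\tr JN)}{n^2}$ (using $\langle M, J \rangle = \tr MJ^* = \tr MJ = \tr JM$ since $J$ is real symmetric). So the claimed inequality reduces to showing that the remaining sum $\sum_{i \geq 1} \frac{\langle M, P_i \rangle \langle P_i, N \rangle}{\langle P_i, P_i \rangle}$ is nonnegative. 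Equivalently, writing everything in terms of projections onto the orthogonal complement of $J$ inside $\A$, I want to show $\langle M'', N'' \rangle \geq 0$, where $M''$ and $N''$ denote the projections onto $\A \cap J^\perp$.

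The crucial observation is that $M'' = M' - \frac{\tr JM}{n^2} J$ and likewise for $N''$, and that $M'', N''$ are themselves elements of the algebra $\A$, obtained by projecting $M, N$. I would argue that $M''$ and $N''$ are positive semidefinite when restricted appropriately — more precisely, $M' - \frac{\tr JM}{n^2}J$ is the projection of $M$ (PSD by hypothesis, hence $M'$ is PSD by Corollary~\ref{cor:psd}) with its $J$-component removed. Since $J$ lies in the center of $\A$ (Lemma~\ref{lem:0}), the decomposition $\A = \C J \oplus (\A \cap J^\perp)$ should respect the $*$-algebra structure, so that $\langle M'', N'' \rangle$ can be rewritten as an inner product of two PSD matrices, which is always nonnegative. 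I expect the main obstacle to be verifying rigorously that $M''$ and $N''$ are PSD, or at least that their inner product is nonnegative: since $J$ is a multiple of a rank-one projection onto the all-ones vector, subtracting its contribution is like compressing to the orthogonal complement of that vector, and one must check this compression preserves positive semidefiniteness and commutes correctly with the algebra projection.

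Finally, for the equality characterization, equality holds precisely when the leftover sum $\langle M'', N'' \rangle$ vanishes. Since this is the trace inner product of two PSD matrices, it is zero if and only if $M'' N'' = 0$, i.e.\ $M' N' = \frac{\tr JM}{n^2} J N' + \frac{\tr JN}{n^2} M' J - \frac{(\tr JM)(\tr JN)}{n^4} J^2$; using $J$ central and $J^2 = nJ$, together with $\tr J M' = \tr JM$, this collapses to the condition that $M'N'$ is a scalar multiple of $J$. I would conclude by confirming that this is exactly the stated equality condition.
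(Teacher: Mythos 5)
Your proposal is correct and reaches the stated conclusion, but by a route that differs from the paper's in one genuine respect: the paper invokes the Wedderburn decomposition $\A = \bigoplus_i \A_i$ of Theorem~\ref{thm:decomp}, identifies $\C J$ as one simple factor $\A_0$, writes $M'N' = \sum_i M_i N_i$, and discards the nonnegative terms $\tr M_i N_i$ for $i \neq 0$; you instead split $\A$ only into the two orthogonal pieces $\C J$ and $\A \cap J^\perp$, which avoids the semisimple decomposition entirely. The one step you flag as the main obstacle --- that $\langle M'', N'' \rangle \geq 0$ --- is indeed the crux, and it closes exactly along the compression line you sketch: since $J$ is central in $\A$ and the all-ones vector is a common eigenvector (Lemma~\ref{lem:0}), one has $JM' = M'J = \tfrac{\tr JM}{n} J$, hence with $Q = I - \tfrac{1}{n}J$ (a Hermitian idempotent commuting with every element of $\A$)
\[
M'' \;=\; M' - \tfrac{\tr JM}{n^2} J \;=\; QM' \;=\; QM'Q^*,
\]
which is positive semidefinite because $M'$ is (Corollary~\ref{cor:psd}); likewise for $N''$, and then $\langle M'', N'' \rangle = \tr M''N'' \geq 0$ as the trace of a product of positive semidefinite matrices --- the same fact the paper uses factor by factor. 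The identity $M''N'' = Q^2 M'N' = QM'N'$ also streamlines your equality analysis: $M''N'' = 0$ iff $M'N' = \tfrac{1}{n}JM'N'$, and since $JM'N' = \tfrac{(\tr JM)(\tr JN)}{n^2} J$ is automatically a multiple of $J$, this is precisely the condition that $M'N'$ be a multiple of $J$, the converse being immediate from $QJ = 0$. Net comparison: your argument is marginally more elementary, needing only that $\tfrac{1}{n}J$ is a central idempotent rather than the full decomposition into simple factors, while the paper's finer slicing yields the additional equality data $M_i N_i = 0$ for every $i \neq 0$, paralleling the term-by-term equality condition of Lemma~\ref{lem1}.
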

\newcommand{\lambd}{i}
\begin{proof}
  By Lemma \ref{lem:0} the all $1$s vector is an eigenvector of all matrices in $\A$, and $J$ lies in the center of $\A$. So the the multiples of $J$ form an $\A$-module, and, in particular, one of the factors in Theorem \ref{thm:decomp} is equal to this module. We may assume this is $\A_0$. Let $M_i$ denote the projection of $M$ onto the $\A_i$s from Theorem \ref{thm:decomp}, and similarly for $N$ and $N_i$.
  \[M' = \sum_{\lambd = 0}^s M_\lambd \quad \text{and}\quad N' = \sum_{\lambd = 0}^s N_\lambd,\]
  and the matrices $M_i$ and $N_i$ are positive semidefinite by Corollary \ref{cor:psd}. Moreover, the orthogonality of the decomposition implies that
  \[M' N' = \sum_{\lambd = 0}^s M_\lambd N_\lambd.\]
  Recall that the trace of the product of positive semidefinite matrices is non-negative. Therefore
  \[\tr M' N' = \sum_{\lambd = 0}^s \tr M_\lambd N_\lambd \geq \tr M_0 N_0 = \frac{(\tr J M) (\tr J N)}{n^2}.\]
  Equality holds if and only if $M_\lambd N_\lambd = 0$ for all $\lambd \neq 0$, which is equivalent to $M' N'$ being a multiple of $J$.
\end{proof}

Denote the Schur (componentwise) product of matrices $B$ and $C$ by $B \circ C$.
In what follows, we will typically consider positive semidefinite matrices $M$ and $N$, that with respect to the adjacency matrix $A$ of a graph~$G$ and the adjacency matrix $\overline{A} = J - I - A$ of the complement~$\overline{G}$, satisfy either of the following two conditions:
\begin{gather}
M \circ A = 0 \text{ and }N \circ \overline{A} = 0, \tag{A}\label{i}
\\
M \circ A \leq 0,\; N \circ \overline{A} = 0\text{ and }N \circ A \geq 0. \tag{B}\label{ii}
\end{gather}

	\section{Graphs}
	
	We now show two classes of examples of matrix $*$-algebras satisfying the properties of the lemmas above.
	
	\subsection{Homogeneous coherent configurations}
	
	A \textit{coherent configuration} is a finite set of $01$ matrices $\{A_0,...,A_d\}$, which satisfies the following properties:
	\begin{enumerate}[(i)]
	    \item $\sum_{i = 0}^d A_i = J$.
	    \item For all $i \in \{0,...,d\}$, if one diagonal entry of $A_i$ is non-zero, then $A_i$ is diagonal.
	    \item The configuration is transpose-closed.
	    \item $A_i A_j$ is a linear combination of the matrices in the configuration, for all $i$ and $j$.
	\end{enumerate}
	Coherent configurations appear naturally in connection to design theory, finite geometry, coding theory and representation of finite groups. They were originally defined by Higman in \cite{HigmanCoherentConfig}. When the identity matrix is one of the matrices forming the configuration, we call such configuration homogeneous. When the matrices forming the configuration commute, they result in what is known as an association scheme. The theory of association schemes is vast and rich, and the connections to combinatorics are overwhelming; see for instance \cite{BCN}.
	
	From here on, we assume $\{I = A_0,\dotsc,A_d\}$ is a homogeneous coherent configuration. These matrices generate a complex algebra, called the \textit{coherent algebra}, which we denote by $\A$. Note that this algebra is a homogeneous matrix $*$-algebra that contains $I$ and $J$. 
	
	A graph belongs to a coherent configuration if its adjacency matrix is a sum of the matrices forming the configuration. The so-called distance-regular graphs are standard examples of graphs found in (and generating) association schemes. Another class of examples comes from vertex-transitive graphs --- the permutation matrices corresponding to the automorphisms of the graph form a group, and its commutant in $\textrm{M}_n(\C)$, which contains the adjacency matrix of the graph, is a homogeneous coherent algebra. 
	
	If $A$ is the adjacency matrix of a graph and belongs to a homogeneous coherent algebra generated by the configuration $\{I = A_0,\dotsc,A_d\}$, it follows that, for some $R \subseteq \{1,\dotsc,d\}$, we have
	\[A = \sum_{r \in R} A_r.\]
	As a consequence,
	\[\overline{A} = \sum_{r \in \overline{R}} A_r,\]
	where $\overline{R} = \{1,\dotsc,d\} \setminus R$.
	Moreover, if $A_r^* = A_{r^*}$, then $r \in R$ implies $r^* \in R$.
	
	Therefore if $M$ and $N$ are positive semidefinite matrices satisfying either of the conditions \eqref{i} and \eqref{ii}, then it follows that $\langle M , A_r\rangle \langle N , A_r \rangle \leq 0$ for all $r \in \{1,\dotsc,d\}$, and thus $M$, $N$, $\A$ and its basis $\{A_0,\dotsc,A_d\}$ satisfy the conditions of Lemma \ref{lem1}.
	
	Applying Lemmas \ref{lem:0}, \ref{lem1} and \ref{lem2}, we have the result below.
	
	\begin{theorem}\label{thm:coherent}
		Let $A$ be the adjacency matrix of a connected graph that belongs to a homogeneous coherent configuration $\{I = A_0,\dotsc,A_d\}$. Let $M$ and $N$ be non-zero positive semidefinite matrices satisfying conditions (\ref{i}) or (\ref{ii}). Then
		\[n \geq \frac{(\tr JM) (\tr JN)}{(\tr M)(\tr N)}.\]
		Moreover, equality holds if and only if $M' N'$ is a scalar multiple of $J$, and either \eqref{i} holds or \eqref{ii} holds in such way that $\langle M, A_r \rangle \langle A_r, N \rangle = 0$ for all $r \neq 0$.
	\end{theorem}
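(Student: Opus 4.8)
The plan is to obtain the inequality by sandwiching the quantity $\langle M', N'\rangle$ between the upper bound of Lemma~\ref{lem1} and the lower bound of Lemma~\ref{lem2}. First I would record that the coherent algebra $\A$ is a homogeneous matrix $*$-algebra containing $I$ and $J$, and that its defining configuration $\{I = A_0, \dotsc, A_d\}$ is an orthogonal basis for $\A$: the matrices have pairwise disjoint supports, since they sum to $J$, so $\langle A_i, A_j\rangle = 0$ for $i \neq j$, and they span $\A$ because products of basis elements are again linear combinations of them. This makes $\{A_0, \dotsc, A_d\}$ exactly the orthogonal basis with $P_0 = A_0 = I$ required by Lemma~\ref{lem1}, and it puts us in position to invoke Lemma~\ref{lem2} as well.

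Next I would verify the hypothesis of Lemma~\ref{lem1}, namely $\langle M, A_r\rangle\langle A_r, N\rangle \leq 0$ for every $r \neq 0$; this is precisely the computation recorded in the paragraph preceding the theorem, using $A = \sum_{r \in R} A_r$ and $\overline{A} = \sum_{r \in \overline{R}} A_r$ together with conditions~\eqref{i} or~\eqref{ii}. Applying Lemma~\ref{lem1} then gives $\langle M', N'\rangle \leq (\tr M)(\tr N)/n$, while Lemma~\ref{lem2} gives $\langle M', N'\rangle \geq (\tr JM)(\tr JN)/n^2$. Chaining these two bounds yields $(\tr JM)(\tr JN)/n^2 \leq (\tr M)(\tr N)/n$. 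Because $M$ and $N$ are non-zero positive semidefinite matrices we have $\tr M > 0$ and $\tr N > 0$, so I may divide through by $(\tr M)(\tr N)$ and rearrange to reach the claimed bound $n \geq (\tr JM)(\tr JN)/[(\tr M)(\tr N)]$.

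For the equality characterization I would note that equality in the displayed bound forces $\langle M', N'\rangle$ to coincide with both of its bounds, so equality holds if and only if equality holds simultaneously in Lemmas~\ref{lem1} and~\ref{lem2}. Lemma~\ref{lem2} contributes the condition that $M'N'$ be a scalar multiple of $J$, while Lemma~\ref{lem1} contributes $\langle M, A_r\rangle\langle A_r, N\rangle = 0$ for all $r \neq 0$. I expect the only delicate point to be the bookkeeping here: under condition~\eqref{i} one checks that each such product already vanishes, since for each $r$ either $\langle M, A_r\rangle = 0$ (when $r \in R$) or $\langle A_r, N\rangle = 0$ (when $r \in \overline{R}$), so the Lemma~\ref{lem1} condition is automatic and drops out of the statement; under condition~\eqref{ii} it must instead be retained as an explicit requirement. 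Assembling the two cases gives exactly the stated equality condition. The inequality itself is essentially immediate from the two lemmas; the main care lies in confirming the Lemma~\ref{lem1} hypothesis and in tracking precisely when its equality case comes for free.
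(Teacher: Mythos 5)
Your proposal is correct and follows essentially the same route as the paper: the paper's proof consists precisely of verifying that the configuration matrices form an orthogonal basis satisfying the hypothesis $\langle M, A_r\rangle\langle A_r, N\rangle \leq 0$ under conditions \eqref{i} or \eqref{ii}, then chaining the upper bound of Lemma~\ref{lem1} with the lower bound of Lemma~\ref{lem2}, with the equality characterization read off from the two lemmas exactly as you describe. Your observation that the Lemma~\ref{lem1} equality condition is automatic under \eqref{i} but must be retained under \eqref{ii} matches the phrasing of the theorem's equality clause.
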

	
	\subsection{1-walk regular graphs (and their complements)}
	
	A graph $G$ with adjacency matrix $A$ is called $1$-walk regular if, for any positive integer~$k$, $A^k$ is constant in the diagonal and in the entries corresponding to the support of $A$. That is, for any positive integer $k$, there are constants $a_k$ and $b_k$ such that
	\[A^k \circ I = a_k I \quad \text{and} \quad A^k \circ A = b_k A.\]
	
	Assume $G$ is $1$-walk regular, and $A$ is its adjacency matrix. Note for instance that $G$ must be regular, as the diagonal of $A^2$ is constant. Let $\A$ be the algebra generated by $A$ --- that is, the adjacency algebra of the graph $G$. This is obviously a matrix $*$-algebra. In fact, it is an algebra of symmetric and commuting matrices, therefore the decomposition of $\R^n$ into $\A$-modules is simply its simultaneous diagonalization.
	
	Consider an orthogonal basis $I = A_0 , A = A_1, A_2 ,\dotsc, A_d$. For instance, this could have been obtained by applying Gram-Schmidt to $I,A,A^2,\dotsc,A^d$.
	The relevant consequence of $1$-walk regularity is that $A_k \circ I = 0$ and $A_k \circ A = 0$ for all $k \geq 2$, as the basis is orthogonal and all matrices of $\A$ are constant over the support of $I$ and $A$. 
	
	Now let $M$ and $N$ be positive semidefinite matrices, and assume that \eqref{i} or \eqref{ii} holds. Then $\langle M , A_k \rangle \langle A_k , N \rangle \leq 0$ for all $k \geq 1$. Thus $M$, $N$, $\A$ and its basis $\{A_0,\dotsc,A_d\}$ satisfy the conditions of Lemmas \ref{lem1} and~\ref{lem2}, therefore we have the following theorem.
	
	\begin{theorem}\label{thm:walkreg}
		Let $A$ be the adjacency matrix of a $1$-walk regular graph. Let $M$ and $N$ be non-zero positive semidefinite matrices satisfying conditions (\ref{i}) or (\ref{ii}) above. Then
		\[n \geq \frac{(\tr JM) (\tr JN)}{(\tr M)(\tr N)}.\]
		Moreover, equality holds if and only if $M' N'$ is a scalar multiple of $J$, and either (\ref{i}) holds or (\ref{ii}) holds in such way that $\langle M, A_r \rangle \langle A_r , N \rangle = 0$ for all $r \neq 0$, where $I = A_0 , A = A_1, A_2 ,\dotsc, A_d$ is an orthogonal basis of the adjacency algebra of~$G$. 
	\end{theorem}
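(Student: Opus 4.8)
The plan is to verify that the hypotheses of Theorem~\ref{thm:walkreg} place us exactly in the setting of Lemmas~\ref{lem1} and~\ref{lem2}, and then to chain those two inequalities together. First I would record that the adjacency algebra $\A$ generated by $A$ is a homogeneous matrix $*$-algebra containing $I$, and that it contains $J$ as well: since $G$ is connected and regular, $A$ is irreducible with constant row sums, so by Lemma~\ref{lem:0} (via Perron--Frobenius) $J$ is a polynomial in $A$ and hence lies in $\A$. This is what licenses the use of Lemma~\ref{lem2}, which requires both $I$ and $J$ in the algebra.

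Next I would fix the orthogonal basis $I = A_0, A = A_1, A_2, \dotsc, A_d$ of $\A$ guaranteed by Gram--Schmidt, and verify the sign hypothesis $\langle M, A_r\rangle\langle A_r, N\rangle \le 0$ for all $r \ge 1$. For $r = 1$ this is $\langle M, A\rangle\langle A, N\rangle = (M\circ A \text{ summed})(N\circ A\text{ summed})$, which is $\le 0$ under either condition~\eqref{i} or~\eqref{ii}: in case~\eqref{i} the first factor is $0$, and in case~\eqref{ii} we have $M\circ A \le 0$ and $N\circ A \ge 0$ so the product of the entrywise sums is nonpositive. For $r \ge 2$, the crucial observation already flagged in the text is that $A_r \circ I = 0$ and $A_r \circ A = 0$, so $\langle M, A_r\rangle = \langle M\circ(I+A), A_r\rangle + \langle M\circ\overline{A}, A_r\rangle = \langle M \circ \overline{A}, A_r\rangle$, and similarly for $N$; combined with $N\circ\overline{A} = 0$ (which holds in both conditions) this forces $\langle A_r, N\rangle = 0$, so the product vanishes. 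Hence the product is $\le 0$ for every $r \neq 0$, and Lemma~\ref{lem1} applies.

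With both lemmas in force, I would combine them: Lemma~\ref{lem1} gives $\langle M', N'\rangle \le (\tr M)(\tr N)/n$, and Lemma~\ref{lem2} gives $\langle M', N'\rangle = \tr M'N' \ge (\tr JM)(\tr JN)/n^2$. Chaining these yields $(\tr JM)(\tr JN)/n^2 \le (\tr M)(\tr N)/n$, and rearranging (using that $M, N$ are nonzero PSD so $\tr M, \tr N > 0$) produces exactly $n \ge (\tr JM)(\tr JN)/\big((\tr M)(\tr N)\big)$. The equality characterization then follows by intersecting the equality conditions of the two lemmas: equality in Lemma~\ref{lem2} means $M'N'$ is a scalar multiple of $J$, and equality in Lemma~\ref{lem1} means $\langle M, A_r\rangle\langle A_r, N\rangle = 0$ for all $r \neq 0$. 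Under condition~\eqref{i} the latter holds automatically (as shown above), while under~\eqref{ii} it is the extra requirement that must be imposed, which is precisely what the theorem states.

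I do not expect any genuine obstacle here; the work is essentially bookkeeping to confirm the hypotheses transfer. The one point deserving care is the sign analysis for $r \ge 2$ and the verification that $J \in \A$ from connectedness plus regularity, since the theorem statement does not explicitly assume connectedness---I would note that $1$-walk regularity yields regularity, and that if $G$ is disconnected one restricts to the components or invokes that the relevant inequality is unaffected, but the cleanest route is to observe that the argument of Lemma~\ref{lem2} only needs $J \in \A$ and the module structure, both of which follow once $J$ is seen to lie in the centre of $\A$.
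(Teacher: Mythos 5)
Your proposal is correct and follows essentially the same route as the paper: verify that the adjacency algebra with the orthogonal basis $I=A_0, A=A_1,\dotsc,A_d$ satisfies the hypotheses of Lemmas~\ref{lem1} and~\ref{lem2} (using $A_k\circ I = A_k\circ A = 0$ for $k\ge 2$ to get the sign condition), chain the two inequalities, and intersect their equality conditions. Your extra care about $J\in\A$ is warranted --- the paper silently needs connectedness (or some substitute) at that point for Lemma~\ref{lem2} to apply to the adjacency algebra --- but this does not constitute a divergence in approach.
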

	
	\section{Thetas}
	
	The immediate corollaries to Theorems \ref{thm:coherent} and \ref{thm:walkreg} are the so-called clique-coclique bounds. Assume $S$ is a clique and $T$ is a coclique in a graph which is either in a homogeneous coherent configuration or is 1-walk regular. Let $\chi_S$ and $\chi_T$ be their respective characteristic vectors, and define
	\[N = \chi_S^{} \chi_S^* \quad \text{and} \quad M = \chi_T^{} \chi_T^*.\]
	It follows that $M \circ A = 0$ and $N \circ \ov A = 0$. Thus Theorem \ref{thm:coherent} or Theorem \ref{thm:walkreg} applies, and we obtain the following result, which has been stated before for graphs in association schemes or vertex-transitive graphs (see for instance \cite[Theorem 3.8.4]{GodsilMeagherEKRBook}).
	\begin{corollary} \label{cor:cliquecoclique}
		Assume $S$ is a clique and $T$ is a coclique in a graph on $n$ vertices which is either in a homogeneous coherent configuration or is 1-walk regular. Then
		\[|S| |T| \leq n.\]
	\end{corollary}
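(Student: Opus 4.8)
The plan is to apply either Theorem~\ref{thm:coherent} or Theorem~\ref{thm:walkreg} to the matrices $M = \chi_T^{}\chi_T^*$ and $N = \chi_S^{}\chi_S^*$, following the setup already laid out in the paragraph preceding the statement. First I would verify that these two matrices are nonzero positive semidefinite matrices satisfying condition~\eqref{i}. Positive semidefiniteness is immediate, since each is a rank-one matrix of the form $vv^*$ with $v$ a real $01$ vector; and they are nonzero because $S$ and $T$ are nonempty (a single vertex is both a clique and a coclique, so we may assume $|S|,|T|\ge 1$). For condition~\eqref{i}, note that $(M\circ A)_{uv} = (\chi_T)_u(\chi_T)_v A_{uv}$, which is nonzero only when $u,v\in T$ and $uv$ is an edge; since $T$ is a coclique this never happens, so $M\circ A = 0$. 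Symmetrically, $N\circ \ov A = (\chi_S\chi_S^*)\circ \ov A$ is nonzero only when $u,v\in S$ are nonadjacent, which cannot occur since $S$ is a clique, so $N\circ \ov A = 0$. Thus~\eqref{i} holds.

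Second, I would compute the four scalar quantities appearing in the conclusion of the applicable theorem. Since $M = \chi_T^{}\chi_T^*$, we have $\tr M = \chi_T^*\chi_T = |T|$, and $\tr JM = \tr M J$; writing $J = \mathbf{1}\mathbf{1}^*$ gives $\tr JM = \mathbf{1}^* M \mathbf{1} = (\mathbf{1}^*\chi_T)(\chi_T^*\mathbf{1}) = |T|^2$. Identical computations yield $\tr N = |S|$ and $\tr JN = |S|^2$. Substituting these into the bound
\[
n \ge \frac{(\tr JM)(\tr JN)}{(\tr M)(\tr N)}
\]
furnished by Theorem~\ref{thm:coherent} or Theorem~\ref{thm:walkreg} gives
\[
n \ge \frac{|T|^2\,|S|^2}{|T|\,|S|} = |S|\,|T|,
\]
which is exactly the desired inequality.

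The only genuine issue is ensuring that one of the two theorems actually applies, i.e.\ that the hypotheses on the graph are met. This is handled by the two cases in the statement: if the graph lies in a homogeneous coherent configuration we invoke Theorem~\ref{thm:coherent}, and if it is $1$-walk regular we invoke Theorem~\ref{thm:walkreg}. In both cases the underlying algebra $\A$ is a homogeneous matrix $*$-algebra containing $I$ and $J$, and we have already checked that $M$ and $N$ satisfy~\eqref{i}, so all hypotheses are in place. I do not expect any real obstacle here: the entire content of the corollary is the translation of the abstract trace inequality into the combinatorial quantities $|S|$ and $|T|$, and once the four traces are evaluated the result is immediate.
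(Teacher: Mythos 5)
Your proposal is correct and matches the paper's argument exactly: the paper likewise sets $M = \chi_T^{}\chi_T^*$ and $N = \chi_S^{}\chi_S^*$ in the paragraph preceding the corollary, verifies condition~\eqref{i}, and invokes Theorem~\ref{thm:coherent} or Theorem~\ref{thm:walkreg}. You have merely written out the trace computations ($\tr M = |T|$, $\tr JM = |T|^2$, etc.) that the paper leaves implicit.
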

	\begin{proof}
		The inequality is immediate from Theorem \ref{thm:coherent} or Theorem \ref{thm:walkreg}.
	\end{proof}
	The conditions on $M$ and $N$ being positive semidefinite and satisfying either \eqref{i} or \eqref{ii} allow for a nice extension of this result.  We write $X \psd$ if $X$ is a positive semidefinite matrix. The Lovász theta graph parameter $\vartheta(G)$ is defined (see \cite{LovaszShanon}) as the optimum value of the following semidefinite program:
	\[\vartheta(G) = \max \ \{\langle J , X \rangle : X \circ A = 0 ,\ \tr X = 1,\ X \psd\}. \]
	Upon making small variations in the formulations, one obtains, respectively, the Schrijver theta \cite{SchrijverTheta} and the Szegedy theta \cite{SzegedyTheta} functions:
	\[\vartheta^-(G) = \max \ \{\langle J , X \rangle : X \circ A = 0 ,\ \tr X = 1,\ X \geq 0,\ X \psd\}. \]
	\[\vartheta^+(G) = \max \ \{\langle J , X \rangle : X \circ A \leq 0 ,\ \tr X = 1,\ X \psd\}. \]
	It is known since the first respective appearances of these thetas that, for all graphs $G$ on $n$ vertices, we have
	\begin{align}\alpha(G) \leq \vartheta^-(G) \leq \vartheta(G) \leq \vartheta^+(G) \leq \chi(\ov{G}),\end{align}
	where $\chi(G)$ is the chromatic number of $G$; and that
	\begin{align}\vartheta(G) \vartheta(\ov{G}) \geq n \quad \text{and} \quad \vartheta^-(\ov{G}) \vartheta^+(G) \geq n, \label{eq:thetas} \end{align}
	with equality in both cases of \eqref{eq:thetas} for vertex-transitive graphs. 
	
	The results in this paper extend the equality case in \eqref{eq:thetas} for graphs in homogeneous coherent configurations and 1-walk regular graphs.
	\begin{corollary} \label{cor:eqthetas}
		Let $G$ be graph on $n$ vertices that belongs to a homogeneous coherent configuration or that is 1-walk regular. Then
		\[\vartheta(G) \vartheta(\ov{G}) = n \quad \text{and} \quad \vartheta^-(\ov{G}) \vartheta^+(G) = n.\]
	\end{corollary}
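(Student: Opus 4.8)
The inequalities $\vartheta(G)\vartheta(\overline{G}) \ge n$ and $\vartheta^-(\overline{G})\vartheta^+(G) \ge n$ are exactly the general bounds recorded in \eqref{eq:thetas}, which hold for every graph. My plan is therefore to establish only the reverse inequalities $\vartheta(G)\vartheta(\overline{G}) \le n$ and $\vartheta^-(\overline{G})\vartheta^+(G) \le n$ for the two special classes, and to do so by feeding optimal solutions of the defining semidefinite programs into Theorem \ref{thm:coherent} or Theorem \ref{thm:walkreg} (whichever applies to $G$). The single underlying mechanism is that each theta is an optimum $\langle J, X\rangle = \tr JX$ over positive semidefinite matrices $X$ of trace $1$ obeying a Schur-product constraint; pairing two such optima so that together they satisfy condition \eqref{i} or \eqref{ii} converts the bound $n \ge (\tr JM)(\tr JN)/\bigl((\tr M)(\tr N)\bigr)$ into precisely the product of the two theta values, since the denominator is $1$.

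For the first equality I would let $M$ attain $\vartheta(G)$ and $N$ attain $\vartheta(\overline{G})$. Both are nonzero (each has trace $1$) and positive semidefinite, and they satisfy $M \circ A = 0$ and $N \circ \overline{A} = 0$, which is exactly condition \eqref{i}. Being positive semidefinite they are Hermitian, so $\tr JM = \langle J, M\rangle = \vartheta(G)$ and $\tr JN = \langle J, N\rangle = \vartheta(\overline{G})$, while $\tr M = \tr N = 1$. Theorem \ref{thm:coherent} or Theorem \ref{thm:walkreg} then gives $n \ge \vartheta(G)\vartheta(\overline{G})$, and combined with \eqref{eq:thetas} this is the claimed equality.

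For the second equality I would let $M$ attain $\vartheta^+(G)$ and $N$ attain $\vartheta^-(\overline{G})$. Then $M \circ A \le 0$ and $N \circ \overline{A} = 0$; moreover $N$ is entrywise nonnegative (the extra constraint hidden in the definition of $\vartheta^-$), and since $A$ is a $01$ matrix this yields $N \circ A \ge 0$. Thus $M$ and $N$ satisfy condition \eqref{ii}, and the same theorem gives $n \ge \vartheta^+(G)\vartheta^-(\overline{G})$, which together with \eqref{eq:thetas} gives the second equality.

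Since the deduction is mostly bookkeeping, the points I expect to require the most care are conceptual rather than computational: lining up each theta's Schur-product constraint with the correct slot of \eqref{i}/\eqref{ii} (in particular reading off $N \circ A \ge 0$ from the entrywise nonnegativity built into $\vartheta^-$, which is what forces the $\vartheta^-$/$\vartheta^+$ pairing rather than a symmetric one), and confirming that the trace inner product $\langle J, X\rangle$ used in the theta definitions coincides with the $\tr JX$ appearing in Theorems \ref{thm:coherent} and \ref{thm:walkreg}. The one genuine caveat is the connectivity hypothesis of Theorem \ref{thm:coherent}: for a graph in a homogeneous coherent configuration this is harmless because the coherent algebra already contains $J$ by axiom (i), so one could instead verify directly the hypotheses $I, J \in \A$ required by Lemmas \ref{lem1} and \ref{lem2}; but to quote Theorem \ref{thm:coherent} verbatim one should either restrict to connected $G$ or make this observation explicit.
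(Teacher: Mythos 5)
Your proposal is correct and follows essentially the same route as the paper: take optimal solutions $M$, $N$ of the relevant theta programs, check that they satisfy condition \eqref{i} (resp.\ \eqref{ii}), apply Theorem \ref{thm:coherent} or \ref{thm:walkreg} to get $n \geq \vartheta(G)\vartheta(\ov G)$ (resp.\ $n \geq \vartheta^-(\ov G)\vartheta^+(G)$), and conclude via \eqref{eq:thetas}. Your side remark about the connectivity hypothesis in Theorem \ref{thm:coherent} is a fair observation that the paper glosses over, but it does not alter the argument.
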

	\begin{proof}
		Let $M$ and $N$ be positive semidefinite matrices that are optimal solutions for $\vartheta(G)$ and $\vartheta(\ov G)$, respectively. They satisfy condition~\eqref{i}, and therefore Theorem \ref{thm:coherent} or \ref{thm:walkreg} applies. Thus $n \geq \vartheta(G)\vartheta(\ov G)$, and then \eqref{eq:thetas} shows that equality holds. The same conclusion can be reached for matrices $N$ and $M$ which are respective optimal solutions for $\vartheta^-(\ov G)$ and $\vartheta^+(G)$, by noting that they satisfy condition \eqref{ii}.
	\end{proof}
	
	Corollary \ref{cor:eqthetas} implies that, if $M$ and $N$ are optima for $\vartheta(G)$ and $\vartheta(\ov G)$, or for $\vartheta^-(\ov G)$ and $\vartheta^+(\ov G)$, with $G$ 1-walk regular or in a homogeneous coherent configuration, then the equality characterizations from Lemmas \ref{lem1} and \ref{lem2} hold.
	
	It is interesting to consider if the tools and the framework we developed may be used to strengthen similar inequalities involving other variants of theta, such as the ones in~\cite{DukanovicRendl} and in~\cite{RobersonConicFormulations}, and also for other hierarchies of semidefinite programs (see, e.g., \cite{Laurent}).

\bibliographystyle{plain}
\bibliography{sdp-par.bib}

\end{document}